\newtheorem{theorem}{Theorem}[section]
\newtheorem{lemma}[theorem]{Lemma}
\newtheorem{corollary}[theorem]{Corollary}	
\theoremstyle{definition}
\newtheorem{remark}[theorem]{Remark}
\newtheorem{definition}[theorem]{Definition}
\title{{\bf\Large Problems with  mixed boundary conditions in Banach spaces}}
\author{{\large Dionicio Pastor Dallos Santos}\footnote{Email: dionicio@ime.usp.br}\hspace{2mm}
{\bf\large}\vspace{1mm}\\
{\small Department of Mathematics, IME-USP, Cidade Universit\'aria,}\\
{\small  CEP 05508-090, S\~ao Paulo, SP, Brazil}
%\vspace{3mm}
}
\date{}
\begin{document}
\maketitle
%%%%%%%%%%%%%% ABSTRACT %%%%%%%%%%%%%% ABSTRACT %%%%%%%%%%%%%%%%%%%%%%%%%%%%%%%%%%%%%%%%%%%%%%%%%%%%%%%%%%%%

\begin{abstract}
Using Leray-Schauder degree or  degree  for $\alpha$-condensing maps  we obtain  the existence of at least one solution for the  boundary  value problem of the type
\[
\left\{\begin{array}{lll}
(\varphi(u' ))'   = f(t,u,u') & & \\
u(T)=0=u'(0),  & & \quad \quad 
\end{array}\right.
\] 
where $\varphi: X\rightarrow X   $ is a homeomorphism  with  reverse Lipschitz  such that $\varphi(0)=0$, $f:\left[0, T\right]\times X \times X \rightarrow X $ is a continuous function,  $T$ a positive real number and $X$ is  a real  Banach space.
\end{abstract}
 
\medskip

\noindent 
Mathematics Subject Classification (2010). 34B15; 47H08; 47H11.

\noindent 
Key words: boundary value problem, Leray-Schauder degree, degree  for $\alpha$-condensing maps, measure of noncompactness.

%%%%%%%%%%%%% INTRODUCTION %%%%%%%%%%%%%%% INTRODUCTION %%%%%%%%%%%%%%%%%%%%%%%%%%%%%%%%%%%%%%%%%%

\section{Introduction}
The purpose of this article is to obtain  some existence results for  the nonlinear boundary value problem of the form
\begin{equation}\label{equa1}
\left\{\begin{array}{lll}
(\varphi(u' ))'   = f(t,u,u') & & \\
u(T)=0=u'(0),
\end{array}\right.
\end{equation}
where $\varphi: X \rightarrow  X   $ is a homeomorphism  such  that $\varphi(0)=0$  and  $\varphi^{-1}$  is Lipschitz, $f:\left[0, T\right]\times X \times X\rightarrow X $ is a continuous function,  $T$ a positive real number, and  $X$ is a real Banach space. We call \textsl{solution } of this problem any function $u:\left[0, T\right]\rightarrow X$ of class $C^{1}$ such that  the function  $t\mapsto \varphi(u'(t))$ is continuously differentiable, satisfying the  boundary conditions and $(\varphi(u'(t) ))' = f(t,u(t),u'(t))$ for all $t\in\left[0,T\right]$.

The existence of solutions for second-order boundary value problems has been studied by many authors using various methods (see \cite{ chandra, guo1, monch, szufula, zhou})

In particular, the authors in  \cite{chandra} have studied the following boundary value problem: 
\begin{equation}\label{antigo}
\left\{\begin{array}{lll}
u''   = f(t,u, u') & & \\
au(0)-bu'(0)=u_{0}, \ \ cu(1)+du'(1)=u_{1},
\end{array}\right.
\end{equation}
where $a,b,c,d$  and $ad+bc>0$. They obtained the existence of solutions of (\ref{antigo}) using Darbo fixed point theorem and properties of the measure of noncompactness.

Recently, W.-X. Zhou and J. Peng \cite{zhou} have studied the  following boundary value problem:
\begin{equation}\label{bouches}
\left\{\begin{array}{lll}
-u'' = f(t,u) & & \\
u(0)=0=u(1)
\end{array}\right.
\end{equation}
where $f:\left[0, 1\right]\times X \rightarrow X $ is a continuous function and  $X$ is a Banach space. They obtained the existence of solutions of (\ref{bouches}), where the main tools used in the study are Sadovskii fixed point theorem  and precise computation of measure of noncompactess.

Inspired by these results, the main aim of this paper is  to study  the existence of at least one solution for the  boundary  value problem (\ref{equa1}) using  Leray-Schauder degree or  degree  for $\alpha$-condensing maps. For this, we reduce the nonlinear boundary value problem to some fixed points problem. Next, we shall essentially consider two types of regularity assumptions for $f(t,x,y)$. In Theorem \ref{numero1} we suppose that $f$ is completely continuous, which allows us to prove that the associated  fixed point operator is completely continuous  required by a Leray-Schauder approach. In Theorem \ref{numero2} we only assume some regularity conditions expresed in terms of the measure of noncompactness, which allows us to apply the methods of topological degree theory for $\alpha$-condensing maps.

The  paper is organized as follows. In Section 2, we establish the notation, terminology, and various lemmas which will be  used throughout this paper. Section 3,  we formulate the fixed point operator equivalent to the problem (\ref{equa1}). Section 4, we give main results in this paper. Section 5, we study the existence of at least one solution for (\ref{equa1}) in Hilbert spaces. For these results, we adapt the ideas of  \cite{ma},  \cite{man11}  and  \cite{man} to the present situation.

%%%%%%%%%%%%%%%%%%%%%%%%%%%%%%%%%%%%%%%%%%%%%%%%%%%%%%%%%%%%%%%%%%%%%%%%%%%%%%%%%%%%%%%%%%%%%%%%%%%%%%%%%%%%%%%%%%%%%%%%%%%%%%%%%%%%%%%%%%%%%%%%%%%%%%%%%%%%%%%%%%%%%%%%%%%%%%%%%%%%%%%%%%%%%%%%%%%%%%%%%%%%%%%%%%%%%%%%%%%%%%%%%%%%%%%%%%%%%%%%%%%%%%%%%%%%%%%%%%%%%%%%%%%%%%%%%%%%%%%%%%%%%%%%%%%%%%%%%%%%%%%%%%%%%%%%%%%%%%%%%%%%%%%%%%%%%%%%%%%%%%%%%%%%%%%%%%%%%%%%%%%%%%%%%%%%%%%%%%%%%%%%%%%%%%%%%%%%%%%%%%%%%%%%%%%%%%%%%%%%%%%%%%%%%%%%%%%%%%%%%%%%%%%%%%%%%%%%%%%%%%%%%%%%%%%%%%%%%%%%%%%%%%%%%%%%5

\section{Notations and preliminary results}
\label{S:-1}
We first introduce some notation. For fixed $T$, we denote  the usual norm in $L^{1}=L^{1}(\left[0,T\right],  X)$ for $\left\| \cdot \right\|_{L^{1}}$. For $C=C(\left[0,T\right],X)$ we indicate the Banach space of all  continuous functions from $\left[0, T\right]$ into $X$ witch the norm  $\left\| \cdot \right\|_{\infty}$ and for  $C^{1}=C^{1}(\left[0,T\right], X)$  we designate the Banach space of continuously differentiable functions  from $\left[0, T\right]$ into $X$ endowed with the usual norm $\left\|u\right\|_{1}= $max$\left\{ \left\|u\right\|_{\infty}, \  \left\|u' \right\|_{\infty}\right\}$.

We introduce the following applications:

the  \textit{Nemytskii operator} $N_f:C^{1} \rightarrow C$, 
\begin{center}
 $N_f (u)(t)=f(t,u(t),u'(t))$, 
\end{center}
the  \textit{integration operator}   $H:C \rightarrow C^{1}$, 
\begin{center}
$ H(u)(t)=\int_0^t u(s)ds$, 
\end{center}
the  continuous linear application   $K:C \rightarrow C^{1}$,
\begin{center}
$ K(u)(t)= -\int_t^T u(s)ds$. 
\end{center} 
 
Throughout this paper, we denote  $(X,\left\|\cdot  \right\|) $  a  real Banach space and $I=\left[0,T\right]$. For $A\subseteq C^{1}$, we  use the notation;
\begin{center}
$A(t)=\left\{u(t):u\in A\right\}$,

$A(I)=\left\{u(t):u\in A, \  t\in I \right\}$,

$A'(t)=\left\{u'(t):u\in A\right\}$,

$A'=\left\{u':u\in A\right\}$,

$A'(I)=\left\{u'(t):u\in A, \  t\in I \right\}$.
\end{center}

\begin{definition}
 Let $X$ be a Banach space and  let $M_{X}$ be the family of bounded subsets of $X$. The Kuratowski measure of noncompactness is the map $\alpha: M_{X} \rightarrow [0,\infty)$ defined for
\begin{center}
$\alpha(B)=$inf$[d>0: B \ $admits a finite cover by sets of diameter $\leq d]$;  here $B\in M_{X}$.
\end{center}
\end{definition}
\textbf{Properties:}
\begin{itemize}
\item [(a)] $\alpha(B)=0$ iff $\overline{B}$ is compact.
\item [(b)] $S\subset B$  then  $\alpha(S)\leq \alpha(B)$.
\item [(c)] $\alpha(\overline{B})=\alpha(B)$.
\item [(d)] $\alpha(B\cup S)= $max $\left\{\alpha(B), \alpha(S)\right\}$.
\item [(e)] $\alpha(\lambda B)=\left|\lambda \right| \alpha( B)$, where  $\lambda \in \mathbb R$ and $\lambda B=\left\{\lambda b: b\in B\right\}$.
\item [(f)] $\alpha(B+ S)\leq \alpha(B)+\alpha(S)$, where $B+S=\left\{b+s: b\in B, s\in S\right\}$.
\item [(g)] $\alpha(\overline{conv}(B))=\alpha(B)$.
\end{itemize}

The details of $\alpha$ and its properties can be found in \cite{man7}.

\begin{definition}(see \cite{guo}). Assume that $D\subset X$ the  mapping $A:D \rightarrow X$ is said to be a condensing operator if $A$ is continuous, 
bounded (sends bounded sets into bounded sets), and for any nonrelatively compact and bounded set $S\subset D$,
\begin{center}
$\alpha(A(S))< \alpha(S)$.
\end{center}
\end{definition}
 
The following lemmas are of great importance in the proof of our main results. The proofs can be found in  \cite{guo}.

In the following, we denote $\alpha_{c}$ and $\alpha_{1}$ by the noncompactness measure in $C$ and $C^{1}$, respectively.
\begin{lemma}\label{lem1}
Let $S$ be a bounded subset of  real numbers and  $B$ a  bounded subset of  $X$. Then     
\begin{center}
$\alpha(SB)=\left( \displaystyle \sup_{t\in S}\left|t\right|\right) \alpha(B)$,
\end{center}
where $SB=\left\{sb:s\in S, \  b\in B\right\}$.
\end{lemma}
\begin{lemma}\label{lem2}
Let $A, \  B $  be  bounded subsets of Banach spaces  $X$ and $Y$ respectively with
\begin{center}
$\left\|(x, y)\right\| = \max \left\{ \left\|x\right\|, \  \left\|y\right\| \right\}$.
\end{center}
 Then     
\begin{center}
$\alpha(A\times B)=\max \left\{\alpha(A), \ \alpha(B)\right\}$.
\end{center}
\end{lemma}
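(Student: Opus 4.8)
The plan is to establish the two inequalities $\alpha(A\times B)\le\max\{\alpha(A),\alpha(B)\}$ and $\max\{\alpha(A),\alpha(B)\}\le\alpha(A\times B)$ separately, arguing directly from the definition of the Kuratowski measure.

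For the upper bound I would fix $\varepsilon>0$ and pick finite covers $A=\bigcup_{i=1}^{m}A_{i}$ and $B=\bigcup_{j=1}^{n}B_{j}$ with $\operatorname{diam}(A_{i})\le\alpha(A)+\varepsilon$ and $\operatorname{diam}(B_{j})\le\alpha(B)+\varepsilon$ for all $i,j$. The products $A_{i}\times B_{j}$ with $1\le i\le m$ and $1\le j\le n$ then form a finite cover of $A\times B$, and since $X\times Y$ carries the maximum norm one has $\operatorname{diam}(A_{i}\times B_{j})=\max\{\operatorname{diam}(A_{i}),\operatorname{diam}(B_{j})\}\le\max\{\alpha(A),\alpha(B)\}+\varepsilon$. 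Hence $\alpha(A\times B)\le\max\{\alpha(A),\alpha(B)\}+\varepsilon$, and letting $\varepsilon\downarrow 0$ gives the inequality.

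For the lower bound I would use the coordinate projections $\pi_{X}\colon X\times Y\to X$ and $\pi_{Y}\colon X\times Y\to Y$. With the maximum norm these are nonexpansive, since $\|\pi_{X}(x,y)\|=\|x\|\le\|(x,y)\|$, so a finite cover of $A\times B$ by sets of diameter $\le d$ is carried by $\pi_{X}$ onto a finite cover of $\pi_{X}(A\times B)$ by sets of diameter $\le d$; thus $\alpha(\pi_{X}(A\times B))\le\alpha(A\times B)$, and likewise for $\pi_{Y}$. Assuming $A,B\ne\emptyset$ (the empty case being trivial, as $\alpha(\emptyset)=0$) we have $\pi_{X}(A\times B)=A$ and $\pi_{Y}(A\times B)=B$, whence $\alpha(A)\le\alpha(A\times B)$ and $\alpha(B)\le\alpha(A\times B)$, so that $\max\{\alpha(A),\alpha(B)\}\le\alpha(A\times B)$. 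Combining this with the upper bound yields the claim.

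I do not expect a genuine obstacle here; the one place where care is needed is the identity $\operatorname{diam}(A_{i}\times B_{j})=\max\{\operatorname{diam}(A_{i}),\operatorname{diam}(B_{j})\}$, which is exactly where the choice of the maximum norm on the product is essential — for a different product norm one would only obtain a comparable two-sided estimate rather than an equality. If one prefers to avoid invoking the contraction property of nonexpansive maps, the lower bound can equally well be obtained by pushing an arbitrary finite cover of $A\times B$ forward through the two projections and checking directly that the image sets retain diameter at most that of the original cover.
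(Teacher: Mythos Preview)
Your argument is correct and is exactly the standard direct proof of this identity from the definition of the Kuratowski measure. Note, however, that the paper does not actually prove this lemma: it simply states it among a list of preliminary facts and refers the reader to \cite{guo} for the proofs. So there is no in-paper proof to compare against; your two-inequality argument via product covers and nonexpansive coordinate projections is precisely what one finds in the cited reference, and the care you take with the identity $\operatorname{diam}(A_i\times B_j)=\max\{\operatorname{diam}(A_i),\operatorname{diam}(B_j)\}$ under the max norm is the only delicate point.
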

\begin{lemma}\label{lem3} If $H\subset C$ is bounded and equicontinuous, then we have the following:
\begin{itemize}
\item [(1)] $\alpha_{c}(H)=\alpha(H(I))$.
\item [(2)] $\alpha(H(I))=\displaystyle \max_{I}\alpha(H(t))$.
\end{itemize}
\end{lemma}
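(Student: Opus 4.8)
The plan is to establish the two assertions separately, both from a single mechanism: equicontinuity lets one replace the uncountable family of evaluations $\{u(\cdot):u\in H\}$ by the finitely many evaluations at the nodes of a fine enough partition of $I$, after which everything reduces to triangle inequalities together with the elementary properties (a)--(g) of $\alpha$. Fix once and for all an auxiliary $\eta>0$; by equicontinuity of $H$ choose $\delta>0$ with $\|u(t)-u(s)\|<\eta$ whenever $u\in H$ and $|t-s|<\delta$, and pick a partition $0=t_0<t_1<\cdots<t_m=T$ of mesh less than $\delta$. Boundedness of $H$ guarantees all the sets below lie in the domain of $\alpha$.

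For part (1) I would prove the two inequalities. To get $\alpha(H(I))\le\alpha_{c}(H)$: take any finite cover $H=\bigcup_{i=1}^{n}H_i$ with $\operatorname{diam}_{C}(H_i)\le\alpha_{c}(H)+\eta$; then the $nm$ sets $H_i([t_{j-1},t_j])$ cover $H(I)$, and a three-term triangle inequality through the common node $t_{j-1}$ shows each of them has $X$-diameter at most $\alpha_{c}(H)+3\eta$, so $\alpha(H(I))\le\alpha_{c}(H)+3\eta$; let $\eta\to0$. For the reverse inequality $\alpha_{c}(H)\le\alpha(H(I))$: take a finite cover $H(I)=\bigcup_{i=1}^{n}S_i$ with $\operatorname{diam}_{X}(S_i)\le\alpha(H(I))+\eta$, and to each $u\in H$ associate the tuple $(\sigma(0),\dots,\sigma(m))\in\{1,\dots,n\}^{m+1}$ with $u(t_j)\in S_{\sigma(j)}$; this sorts $H$ into at most $n^{m+1}$ classes, and two functions in one class are within $\alpha(H(I))+3\eta$ of each other at every $t\in I$ (again passing through the nearest node), so each class has $C$-diameter $\le\alpha(H(I))+3\eta$; let $\eta\to0$.

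For part (2), I would first note that $t\mapsto\alpha(H(t))$ is continuous: from $\|u(t)-u(s)\|<\eta$ one gets $H(t)\subseteq H(s)+\overline{B_X(0,\eta)}$, whence $|\alpha(H(t))-\alpha(H(s))|\le 2\eta$ for $|t-s|<\delta$ using (f); since $I$ is compact the maximum $d_0:=\max_{t\in I}\alpha(H(t))$ is attained. The inequality $d_0\le\alpha(H(I))$ is immediate from $H(t)\subseteq H(I)$ and property (b). For the converse, write $H(I)=\bigcup_{j=1}^{m}H([t_{j-1},t_j])$ and observe $H([t_{j-1},t_j])\subseteq H(t_{j-1})+\overline{B_X(0,\eta)}$, so by (f), (d) and $\alpha(\overline{B_X(0,\eta)})\le2\eta$ one obtains $\alpha(H(I))\le\max_j\bigl(\alpha(H(t_{j-1}))+2\eta\bigr)\le d_0+2\eta$; let $\eta\to0$.

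The only genuinely delicate step is the inequality $\alpha_{c}(H)\le\alpha(H(I))$ in part (1): there one must manufacture a \emph{finite} cover of the function space out of a finite cover of the pointwise range, which is precisely what the pigeonhole over the partition nodes accomplishes, and it is exactly here that equicontinuity cannot be dispensed with — the statement is plainly false for arbitrary bounded $H\subset C$. Everything else is routine bookkeeping with the triangle inequality and the properties of the Kuratowski measure.
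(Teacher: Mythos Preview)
Your proof is correct in all its details; the partition-and-pigeonhole argument for $\alpha_{c}(H)\le\alpha(H(I))$ is exactly the right idea, and your handling of the remaining inequalities via the triangle inequality and properties (b), (d), (f) is clean.

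By way of comparison: the paper does not actually prove this lemma. It states Lemmas~\ref{lem1}--\ref{lem5} together and simply refers the reader to \cite{guo} for the proofs. So there is no ``paper's own proof'' to compare against here; your argument supplies what the paper omits. The standard proofs in the cited literature follow essentially the same equicontinuity-plus-finite-partition strategy you outline, so your approach is not only correct but also the expected one.
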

\begin{lemma}\label{lem4}
If $H$ is a bounded set in  $C^{1}$, then    
\begin{itemize} 
\item [(ii)] $\alpha_{1}(H)\geq \alpha(H(I))$.
\item [(ii)] $2\alpha_{1}(H)\geq \alpha(H'(I))$.
\end{itemize}
\end{lemma}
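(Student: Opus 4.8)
The plan is to prove the two inequalities separately, the key observation being that boundedness of $H$ in $C^1$ forces $H$ itself (but not the set of derivatives $H'$) to be equicontinuous as a subset of $C$; this dichotomy is exactly what accounts for the extra factor $2$ in the second estimate.

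For the first inequality I would start by setting $M=\sup_{u\in H}\|u\|_{1}<\infty$ and noting that for every $u\in H$ and $t,s\in I$ one has $\|u(t)-u(s)\|=\big\|\int_{s}^{t}u'(r)\,dr\big\|\le M|t-s|$, so $H$ is bounded and equicontinuous when viewed inside $C$. Lemma~\ref{lem3}(1) then yields $\alpha_{c}(H)=\alpha(H(I))$. On the other hand the inclusion $j\colon C^{1}\hookrightarrow C$ is non-expansive, since $\|u\|_{\infty}\le\|u\|_{1}$, and a non-expansive map cannot increase the Kuratowski measure (immediate from the definition: the image of a set of diameter $\le d$ again has diameter $\le d$), so $\alpha_{c}(H)=\alpha(j(H))\le\alpha_{1}(H)$. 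Combining the two gives $\alpha(H(I))=\alpha_{c}(H)\le\alpha_{1}(H)$.

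For the second inequality the obstruction is that $H'$ is only known to be bounded in $C$, not equicontinuous, so Lemma~\ref{lem3} is not available; I would instead argue straight from the definition of $\alpha_{1}$. Fix $d>\alpha_{1}(H)$ and take a finite cover $H=\bigcup_{i=1}^{n}H_{i}$ with $\operatorname{diam}_{C^{1}}(H_{i})\le d$. For each $i$ pick $u_{i}\in H_{i}$; since $u_{i}'$ is continuous on the compact interval $I$, the set $u_{i}'(I)$ is compact and admits a finite cover by balls $B(y_{i,1},\varepsilon),\dots,B(y_{i,m_{i}},\varepsilon)$. Now for any $u\in H_{i}$ and $t\in I$ we have $\|u'(t)-u_{i}'(t)\|\le\|u-u_{i}\|_{1}\le d$, while $u_{i}'(t)\in B(y_{i,j},\varepsilon)$ for some $j$, hence $u'(t)\in B(y_{i,j},d+\varepsilon)$. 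Since every point of $H'(I)$ is of the form $u'(t)$ with $u$ in some $H_{i}$, the finitely many balls $B(y_{i,j},d+\varepsilon)$ cover $H'(I)$, and each has diameter at most $2(d+\varepsilon)$; therefore $\alpha(H'(I))\le 2(d+\varepsilon)$. Letting $\varepsilon\to0$ and then $d\to\alpha_{1}(H)$ gives $\alpha(H'(I))\le 2\alpha_{1}(H)$.

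The only genuine subtlety is the one just flagged: the first part attains the sharp constant because the $C^{1}$-bound supplies equicontinuity of $H$ for free (so one may invoke Lemma~\ref{lem3}), whereas for $H'$ no equicontinuity is available and the naive covering argument loses a factor $2$ when passing from ``$\varepsilon$-balls around a compact set'' to ``sets of diameter $2\varepsilon$''. Everything else is a routine application of the listed properties of $\alpha$ (monotonicity, the behaviour under Lipschitz maps, and the finite-union formula) together with the compactness of $I$ and of $u_{i}'(I)$.
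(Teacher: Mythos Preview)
Your argument is correct. The paper itself does not prove Lemma~\ref{lem4}; it merely states it and cites \cite{guo} for the proof, so there is no in-paper argument to compare against. Your route---using the $C^{1}$-bound to gain equicontinuity of $H$ and then invoking Lemma~\ref{lem3} together with the $1$-Lipschitz inclusion $C^{1}\hookrightarrow C$ for part~(i), and a direct finite-cover argument exploiting compactness of each $u_{i}'(I)$ for part~(ii)---is the standard one and matches what one finds in the cited reference. Your remark that the factor $2$ in (ii) arises precisely because $H'$ need not be equicontinuous (so Lemma~\ref{lem3} is unavailable and one falls back on covering by balls, doubling the diameter) is exactly the point.
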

\begin{lemma}\label{lem5}
If $H$ is a bounded set in  $C^{1}$ and $H'$ equicontinuous, then    
\begin{center}
$\alpha_{1}(H)=  \max \left\{  \displaystyle \max_{I}\alpha(H(t)), \   \displaystyle \max_{I}\alpha(H'(t))  \right\} $.
\end{center}
\end{lemma}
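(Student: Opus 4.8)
The plan is to reduce the statement to the identity $\alpha_{1}(H)=\max\{\alpha_{c}(H),\alpha_{c}(H')\}$ and then to establish that identity by squeezing $\alpha_{1}(H)$ between two matching bounds. To carry out the reduction, I would first observe that, since $H$ is bounded in $C^{1}$, there is a constant $M>0$ with $\left\|u\right\|_{1}\le M$ for every $u\in H$; consequently $H$ (viewed inside $C$) and $H'$ are bounded in $C$, and from $\left\|u(t)-u(s)\right\|=\left\|\int_{s}^{t}u'(\tau)\,d\tau\right\|\le M\left|t-s\right|$ the family $H$ is (uniformly Lipschitz, hence) equicontinuous. Since $H'$ is equicontinuous by hypothesis, both $H$ and $H'$ meet the hypotheses of Lemma~\ref{lem3}, which gives $\alpha_{c}(H)=\alpha(H(I))=\max_{I}\alpha(H(t))$ and $\alpha_{c}(H')=\alpha(H'(I))=\max_{I}\alpha(H'(t))$. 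Hence it suffices to prove $\alpha_{1}(H)=\max\{\alpha_{c}(H),\alpha_{c}(H')\}$, a fact which, as the argument below shows, holds for every bounded $H\subset C^{1}$.

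For the lower bound, recall that a map which is Lipschitz with constant $1$ cannot increase the Kuratowski measure of noncompactness (immediate from the covering definition). Both the inclusion $C^{1}\hookrightarrow C$, $u\mapsto u$, and the differentiation map $C^{1}\rightarrow C$, $u\mapsto u'$, are nonexpansive because $\left\|u-v\right\|_{\infty}\le\left\|u-v\right\|_{1}$ and $\left\|u'-v'\right\|_{\infty}\le\left\|u-v\right\|_{1}$. Applying these to $H$ yields $\alpha_{c}(H)\le\alpha_{1}(H)$ and $\alpha_{c}(H')\le\alpha_{1}(H)$, hence $\alpha_{1}(H)\ge\max\{\alpha_{c}(H),\alpha_{c}(H')\}$. (The inequality $\alpha_{1}(H)\ge\alpha(H(I))$ of Lemma~\ref{lem4} also gives the first of these once combined with Lemma~\ref{lem3}.)

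For the upper bound, I would introduce the map $\Phi:C^{1}\rightarrow C\times C$, $\Phi(u)=(u,u')$, where $C\times C$ carries the norm $\left\|(x,y)\right\|=\max\{\left\|x\right\|_{\infty},\left\|y\right\|_{\infty}\}$. Then $\left\|\Phi(u)-\Phi(v)\right\|=\left\|u-v\right\|_{1}$, so $\Phi$ is an isometric embedding; since an isometry preserves diameters, it preserves the Kuratowski measure, so $\alpha_{1}(H)$ equals the measure of $\Phi(H)$ computed in $C\times C$. As $\Phi(H)\subset H\times H'$ inside $C\times C$, monotonicity of $\alpha$ together with Lemma~\ref{lem2} gives $\alpha_{1}(H)=\alpha(\Phi(H))\le\alpha(H\times H')=\max\{\alpha_{c}(H),\alpha_{c}(H')\}$. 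Combining this with the lower bound gives $\alpha_{1}(H)=\max\{\alpha_{c}(H),\alpha_{c}(H')\}$, and substituting the two identities furnished by Lemma~\ref{lem3} yields the claim.

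The step I expect to be the main obstacle is the upper bound, and more precisely being at ease with two points: that one may legitimately compute $\alpha_{1}(H)$ inside the larger space $C\times C$ (covers by small-diameter sets transport in both directions along the isometric embedding $\Phi$), and that although $\Phi(H)$ is in general a \emph{proper} subset of the product $H\times H'$, this is harmless since only the inequality ``$\le$'' is needed there. A reader who prefers not to change spaces can obtain the same bound directly: using the equicontinuity of $H$ and of $H'$, fix a partition $0=t_{0}<\dots<t_{k}=T$ of mesh so small that $\left|t-s\right|$ below the mesh forces $\left\|u(t)-u(s)\right\|$ and $\left\|u'(t)-u'(s)\right\|$ to be less than $\varepsilon$ for all $u\in H$, cover each of the finitely many sets $H(t_{j})$ and $H'(t_{j})$ by finitely many sets of diameter at most $\max\{\max_{I}\alpha(H(t)),\max_{I}\alpha(H'(t))\}+\varepsilon$, and assemble the induced finite partition of $H$; a $3\varepsilon$-estimate then shows each piece has $\left\|\cdot\right\|_{1}$-diameter within $3\varepsilon$ of the target, and letting $\varepsilon\to0$ finishes. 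The route through Lemma~\ref{lem2} is shorter given what is already at hand.
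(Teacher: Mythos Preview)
Your argument is correct. Note, however, that the paper does not supply its own proof of this lemma: it is one of the preliminary lemmas for which the author simply writes ``The proofs can be found in \cite{guo}.'' So there is no in-paper proof to compare against; you have filled in what the paper outsources.

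Your route is clean and in fact proves a bit more than stated: the identity $\alpha_{1}(H)=\max\{\alpha_{c}(H),\alpha_{c}(H')\}$ you establish via the isometric embedding $\Phi:C^{1}\to C\times C$ and Lemma~\ref{lem2} holds for every bounded $H\subset C^{1}$, with the equicontinuity of $H'$ (and the derived equicontinuity of $H$) used only at the final step to invoke Lemma~\ref{lem3} and rewrite $\alpha_{c}(H)$, $\alpha_{c}(H')$ as the pointwise maxima. The two points you flag as potential obstacles---that an isometric embedding preserves $\alpha$, and that $\Phi(H)\subsetneq H\times H'$ is harmless since only ``$\le$'' is needed there---are handled correctly. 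The alternative direct $3\varepsilon$ argument you sketch is the more classical proof one finds in references such as \cite{guo}; your product-space shortcut is tidier given that Lemma~\ref{lem2} is already on hand.
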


%%%%%%%%%%%%%%%%%%%%%%%%%%%%%%%%%%%%%%%%%%%%%%%%%%%%%%%%%%%%%%%%%%%%%%%%%%%%%%%%%%%%%%%%%%%%%%%%%%%%%%%%%%%%%%%%%%%%%%%%%%%%%%%%%%%%%%%%%%%%%%%%%%%%%%%%%%%%%%%%%%%%%%%%%%%%%%%%%%%%%%%%%%%%%%%%%%%%%%%%%%%%%%%%%%%%%%%%%%%%%%%%%%%%%%%%%%%%%%%%%%%%%%%%%%%%%%%%%%%%%%%%%%%%%%%%%%%%%%%%%%%%%%%%%%%%%%%%%%%%%%%%%%%%%%%%%%%%%%%%%%%%%%%%%%%%%%%%%%%%%%%%%%%%%%%%%%%%%%%%%%%%%%%%%%%%%%%%%%%%%%%%%%%%%%%%%%%%%%%%%%%%%%%%%%%%%%%%%%%%%%%%%%%%%%%%%%%%%%%%%%%%%%%%%%%%%%%%%%%%%%%%%%%%%%%%%%%%%%%%%%%%%

\section{Fixed point formulations}
\label{S:0}
Let us consider the operator 
\begin{center}
$M_{1}:C^{1} \rightarrow C^{1}$,

$u \mapsto K\left(\varphi^{-1}\left[H(N_f(u))\right]\right)$.
\end{center}
 Here $\varphi^{-1}$ is understood as the operator  $\varphi^{-1}: C \rightarrow C$ defined for $\varphi^{-1}(v)(t)=\varphi^{-1}(v(t))$. It is clear that $\varphi^{-1}$ is continuous and sends bounded sets into bounded sets.

\begin{lemma}\label{mate2}
$u \in C^{1}$ is  a solution  of (\ref{equa1}) if and only if  $u$ is a fixed point of the operator $M_{1}$.
\end {lemma}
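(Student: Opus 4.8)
The plan is to prove the equivalence by direct verification in both directions, unwinding the definitions of the operators $K$, $H$, $N_f$, and $\varphi^{-1}$, and using the fundamental theorem of calculus together with the boundary conditions $u(T)=0=u'(0)$.

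First, suppose $u\in C^1$ is a solution of (\ref{equa1}). Then $t\mapsto\varphi(u'(t))$ is continuously differentiable with $(\varphi(u'(t)))'=f(t,u(t),u'(t))=N_f(u)(t)$, and $\varphi(u'(0))=\varphi(0)=0$ since $u'(0)=0$ and $\varphi(0)=0$. Integrating from $0$ to $t$ gives $\varphi(u'(t))=\int_0^t N_f(u)(s)\,ds=H(N_f(u))(t)$, hence $u'(t)=\varphi^{-1}\bigl(H(N_f(u))(t)\bigr)=\varphi^{-1}\bigl[H(N_f(u))\bigr](t)$. Now integrate from $t$ to $T$, using $u(T)=0$: $-u(t)=\int_t^T u'(s)\,ds$, so $u(t)=-\int_t^T\varphi^{-1}\bigl[H(N_f(u))\bigr](s)\,ds=K\bigl(\varphi^{-1}[H(N_f(u))]\bigr)(t)=M_1(u)(t)$. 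Thus $u$ is a fixed point of $M_1$.

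Conversely, suppose $u=M_1(u)$, i.e. $u(t)=-\int_t^T\varphi^{-1}\bigl[H(N_f(u))\bigr](s)\,ds$ for all $t\in I$. The integrand $s\mapsto\varphi^{-1}\bigl(H(N_f(u))(s)\bigr)$ is continuous (as a composition of continuous maps), so $u$ is $C^1$ with $u'(t)=\varphi^{-1}\bigl(H(N_f(u))(t)\bigr)$; evaluating at $t=0$ gives $u'(0)=\varphi^{-1}(H(N_f(u))(0))=\varphi^{-1}(0)=0$ since $H(N_f(u))(0)=\int_0^0=0$ and $\varphi^{-1}(0)=0$ (because $\varphi(0)=0$). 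Evaluating $u$ at $t=T$ gives $u(T)=-\int_T^T=0$. Applying $\varphi$ to the expression for $u'$ yields $\varphi(u'(t))=H(N_f(u))(t)=\int_0^t f(s,u(s),u'(s))\,ds$, which is continuously differentiable in $t$ with derivative $f(t,u(t),u'(t))$, so $(\varphi(u'(t)))'=f(t,u(t),u'(t))$ holds on $I$. Hence $u$ is a solution of (\ref{equa1}).

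There is no serious obstacle here; the argument is a routine "integrate twice and read off the boundary conditions" computation. The only points that require a moment's care are: (i) noting that $\varphi(0)=0$ forces $\varphi^{-1}(0)=0$, which is what makes the condition $u'(0)=0$ come out correctly in both directions; (ii) checking that the composite $\varphi^{-1}\circ H\circ N_f$ indeed lands a continuous function into $C$ so that $K$ of it is genuinely $C^1$ and its derivative is the integrand — this is exactly the continuity of the three operators recalled just before the lemma. Everything else is the fundamental theorem of calculus.
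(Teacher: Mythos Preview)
Your proof is correct and follows essentially the same approach as the paper's own proof: integrate the equation once using $u'(0)=0$ (and $\varphi(0)=0$) to get $\varphi(u')=H(N_f(u))$, then apply $\varphi^{-1}$ and $K$ using $u(T)=0$; for the converse, read the boundary conditions off the definitions of $K$ and $H$ and differentiate. Your converse direction is in fact more carefully spelled out than the paper's, which merely notes $K(\cdot)(T)=0$ and $\varphi^{-1}[H(\cdot)](0)=0$ and then says ``it is a simple matter to see'' the rest.
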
 
\begin{proof}
Let $u$ be a solution of (\ref{equa1}). This implies that 
\begin{center}
$(\varphi(u' ))' = f(t,u,u'), \  \  u(T)=0=u'(0)$.
\end{center}
Integrating  of $0$ to $t$ and using the fact that $u'(0)=0$, we deduce that
\begin{center}
$\varphi(u'(t))= H(N_{f}(u))(t)$.
\end{center}
Applying $\varphi^{-1}$ and $K$ to both of its members and using that $u(T)=0$, we have that
\begin{align*}
u(t)&= K\left( \varphi^{-1} \left[H(N_{f}(u))\right]\right)(t) \\
&=M_{1}(u)(t) \  \   \ (t\in \left[0,T\right].
\end{align*}
Conversely, since, by definition of the mapping $M_{1}$,
\begin{center}
$K\left( \varphi^{-1} \left[H(N_{f}(u))\right]\right)(T)=0,   \  \  \varphi^{-1} \left[H(N_{f}(u))\right](0)=0$,
\end{center}
it is a simple matter to see that if $u$ is such that $u=M_{1}(u)$ then $u$ is a solution to (\ref{equa1}).
\end{proof}

Using the theorem of Arzel\`a-Ascoli we show that the operator  $M_{1}$ is completely continuous.
\begin{lemma}\label{lema1}
 If $f$  is completely continuous, then  the operator  $M_{1}:C^{1} \longrightarrow C^{1}$ is completely continuous.
\end{lemma}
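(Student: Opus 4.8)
The plan is to show that $M_1 = K \circ \varphi^{-1} \circ H \circ N_f$ is completely continuous by decomposing it into a composition of maps and tracking continuity and compactness properties through each factor. First I would recall that $f$ completely continuous means that $N_f : C^1 \to C$ is continuous and maps bounded sets into relatively compact sets. The linear operators $H : C \to C^1$ and $K : C \to C^1$ are continuous; moreover, since $H(u)(t) = \int_0^t u(s)\,ds$ and $K(u)(t) = -\int_t^T u(s)\,ds$, for $u$ in a bounded subset of $C$ both $H(u)$ and $K(u)$ lie in a bounded set of $C^1$ whose derivatives coincide (up to sign) with the original functions. The operator $\varphi^{-1} : C \to C$ is continuous and bounded, as noted right after the definition of $M_1$.

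Second, I would prove continuity of $M_1$: it is a composition of the continuous maps $N_f$, then $H$, then $\varphi^{-1}$ (acting on $C$, so one must first note $H(N_f(u)) \in C \subset C^1$ and restrict attention to its values, which is fine), then $K$. Care is needed because $\varphi^{-1}$ is defined as an operator $C \to C$ while $H$ lands in $C^1$; the point is that $C^1 \hookrightarrow C$ continuously, so the composition makes sense and remains continuous. Thus $M_1$ is continuous as a finite composition of continuous maps.

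Third, and this is the heart of the argument, I would show $M_1$ maps a bounded set $\Omega \subset C^1$ into a relatively compact subset of $C^1$, invoking Arzelà–Ascoli in $C^1$, i.e. showing that $M_1(\Omega)$ and its derivative set $(M_1(\Omega))'$ are both bounded and equicontinuous, and that for each fixed $t$ the sets $M_1(\Omega)(t)$ and $(M_1(\Omega))'(t)$ are relatively compact in $X$. Since $f$ is completely continuous, $N_f(\Omega)$ is relatively compact in $C$, hence bounded and equicontinuous; then $H(N_f(\Omega))$ is a bounded, equicontinuous subset of $C^1$ whose derivative set is exactly $N_f(\Omega)$, which is relatively compact. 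Applying the continuous bounded operator $\varphi^{-1}$ preserves boundedness; equicontinuity of $\varphi^{-1}(H(N_f(\Omega)))$ follows from uniform continuity of $\varphi^{-1}$ on bounded sets together with equicontinuity of $H(N_f(\Omega))$, and pointwise relative compactness is preserved because $\varphi^{-1}$ maps relatively compact sets to relatively compact sets (continuous image of a compact set). Finally $K$ sends this bounded equicontinuous set of $C$ into $C^1$: the image functions have the form $t \mapsto -\int_t^T v(s)\,ds$, which are equicontinuous (the integrand is bounded) with equicontinuous derivatives (the derivatives are $v$ itself, which is equicontinuous), and pointwise in $X$ both $K(v)(t)$ and $K(v)'(t) = v(t)$ range over relatively compact sets — the latter directly, the former as the integral of a function with relatively compact range, using that $\overline{\mathrm{conv}}$ of a compact set is compact.

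The main obstacle I anticipate is the bookkeeping around the mismatch between $\varphi^{-1}$ being stated as an operator on $C$ while it is fed an element of $C^1$, and more substantively, verifying that the $C^1$-equicontinuity of the final image holds — that is, controlling the derivatives $(M_1 u)' = \varphi^{-1}(H(N_f u))$ uniformly. This reduces to: $N_f(\Omega)$ relatively compact $\Rightarrow$ $H(N_f(\Omega))$ has uniformly bounded, equicontinuous elements $\Rightarrow$ $\varphi^{-1}$ applied to this family is still equicontinuous and pointwise precompact. The step that requires the most care is showing that the continuous, bounded (but not necessarily compact or Lipschitz) operator $\varphi^{-1} : C \to C$ carries a relatively compact subset of $C$ to a relatively compact subset of $C$; this follows because $\varphi^{-1}$ is continuous and the continuous image of a compact set is compact, applied to $\overline{H(N_f(\Omega))}$. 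Once that is in hand, the two applications of Arzelà–Ascoli (first to get relative compactness in $C$ of the middle stages, then to conclude relative compactness in $C^1$ of $M_1(\Omega)$) finish the proof.
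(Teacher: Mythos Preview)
Your proposal has a genuine gap at the very start of the compactness argument: you assert that ``$f$ completely continuous means that $N_f : C^1 \to C$ is continuous and maps bounded sets into relatively compact sets,'' and later that ``$N_f(\Omega)$ is relatively compact in $C$.'' That is \emph{not} the hypothesis used in the paper. Here ``$f$ completely continuous'' means that the point map $f:[0,T]\times X\times X\to X$ takes bounded sets to relatively compact sets in $X$. This does \emph{not} imply that $N_f(\Omega)$ is relatively compact in $C$: for a bounded $\Omega\subset C^1$ the derivatives $\{u':u\in\Omega\}$ are merely bounded, not equicontinuous, so the family $\{t\mapsto f(t,u(t),u'(t)):u\in\Omega\}$ need not be equicontinuous either, and Arzel\`a--Ascoli fails at this stage. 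Thus the step ``$N_f(\Omega)$ relatively compact in $C$, hence bounded and equicontinuous'' is unjustified, and everything downstream that relies on it (in particular the claim that $(M_1\Omega)'=\varphi^{-1}(H(N_f(\Omega)))$ is equicontinuous because $H(N_f(\Omega))$ has relatively compact derivative set) collapses.

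The paper's route shows how to repair this. One works with $H(N_f(\Omega))$ directly in $C$: boundedness of $N_f(\Omega)$ already gives equicontinuity of the integrated family $H(N_f(\Omega))$ (no equicontinuity of $N_f(\Omega)$ needed), and pointwise relative compactness of $\{H(N_f(u))(t):u\in\Omega\}$ in $X$ is obtained from the inclusion
\[
\int_0^t f(s,u(s),u'(s))\,ds \;\in\; t\,\overline{\mathrm{conv}}\bigl(\{f(s,u(s),u'(s)):s\in[0,T],\ u\in\Omega\}\bigr),
\]
together with the complete continuity of $f$ as a point map (the set on the right is relatively compact, and $\overline{\mathrm{conv}}$ of a relatively compact set is compact). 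Arzel\`a--Ascoli then gives relative compactness of $H(N_f(\Omega))$ in $C$; continuity of $\varphi^{-1}$ carries this to $(M_1\Omega)'$, and from there to relative compactness of $M_1(\Omega)$ in $C^1$. Your outline from the $\varphi^{-1}$ step onward is fine; the missing idea is that equicontinuity must be produced by the integration $H$, not assumed at the level of $N_f$.
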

\begin{proof}
Let $ \Lambda \subset C^{1}$ be a bounded set. Then, if $u \in \Lambda$, there exists a constant  $\rho>0$ such  that  
\begin{equation}\label{dp4}
\left\| u\right\|_{1}\leq \rho.
\end{equation}
Next, we show that  $\overline{M_{1}(\Lambda)}\subset  C^{1}$ is a compact set. Let $(v_{n})_{n} $   be a sequence in   $M_{1}(\Lambda)$, and 
 let $(u_{n})_{n}$ be  a sequence  in  $\Lambda$ such that  $v_{n}=M_{1}(u_{n})$. Using (\ref{dp4}), we have that there exists a constant  $W>0$ such that, for all $n\in \mathbb{N}$,   
\begin{center}
 $\left\| N_{f}(u_{n})\right\|_{\infty}\leq W$,
\end{center}
which implies that
\begin{center}
$\left\| H(N_{f}(u_{n})) \right\|_{\infty}\leq WT$.
\end{center}
Hence the sequence  $ \left(H(N_{f}(u_{n}))\right)_{n}$ is bounded in  $C$. Moreover, for  $t, t_{1}\in\left[0, T\right]$ and for all $n\in \mathbb{N}$, we have that 
\begin{align*}
& \left\|H(N_{f}(u_{n}))(t) - H(N_{f}(u_{n}))(t_{1})\right\| \\
& =\left\|\int_{0}^{t} f(s,u_{n}(s),u_{n}'(s))ds - \int_{0}^{t_{1}} f(s,u_{n}(s),u_{n}'(s))ds\right\|\\
&= \left\|\int_{t_1}^t f(s,u_n(s),u'_n(s))ds\right\|\\
&\leq W\left|t-t_1\right|, 
\end{align*} 
which  implies that  $ \left(H(N_{f}(u_{n}))\right)_{n}$ is equicontinuous.

On the other hand, for $t\in \left[0,T\right]$
\begin{center}
$B(t)=\left\{H(N_{f}(u_{n}))(t):n\in \mathbb{N} \right\}$,
\end{center}
where
\begin{align*}
H(N_{f}(u_{n}))(t)&=\int_{0}^{t} N_{f}(u_{n})(s)ds \\
&=\int_{0}^{t} f(s,u_{n}(s),u_{n}'(s))ds \\
&=t\displaystyle \lim_{m \to \infty}\displaystyle\sum_{k=1}^{m} f(s_{k},u_{n}(s_{k}),u'_{n} (s_{k}))\frac{(s_{k}-s_{k-1})}{t}.
\end{align*}
Recalling that the convex hull of a set $ A \ \subseteq  X$   is given by
\begin{center}
$conv(A)= \left\{ \displaystyle\sum_{i=1}^N \alpha_{i}x_{i}: x_{i}\in A, \  \alpha_{i}\in \mathbb R, \  \alpha_{i}\geq 0, \  \displaystyle\sum_{i=1}^N \alpha_{i}=1  \right\}$, 
\end{center}
it follows that
\begin{center}
$\int_0^t f(s,u_{n}(s),u'_{n}(s))ds \in t\overline{conv}(\left\{f(s, u_{n}(s), u'_{n}(s)):s\in[0,T],\  n\in \mathbb{N}\right\})$,
\end{center}
which implies that
\begin{center}
$B(t)=\left\{H(N_{f}(u_{n}))(t):n\in \mathbb{N} \right\}\subset t\overline{conv}(\left\{f(s,u_{n}(s),u'_{n}(s)):s\in[0,T], \ n\in \mathbb{N}\right\})$.
\end{center}
Using the fact that  $f:\left[0, T\right]\times X \times X \longrightarrow X $ is completely continuous,  we deduce that $\alpha(B(t))=0$. Hence, $B(t)$ is a relatively compact set in $ X$. Thus, by the Arzel\`a-Ascoli theorem  there is a subsequence of $(H(N_{f}(u_{n})))_{n}$, which we call $( H(N_{f}(u_{n_{j}})))_{j}$, which is convergent in C. Using the fact that $\varphi^{-1}: C\rightarrow  C$ is continuous it follows from
\begin{center}
$M_{1}(u_{n_{j}})'=\varphi^{-1} \left[H(N_{f}(u_{n_{j}}))\right] $
\end{center}
that the sequence $(M_{1}(u_{n_{j}})')_{j}$  is convergent in $C$ and hence $(v_{n_{j}})_{j}=( M_{1}(u_{n_{j}}))_{j}$ is convergent in $C^{1}$. Finally, let  $(v_{n})_{n}$ be a sequence in  $\overline{M_{1}(\Lambda)}$. Let  $(z_{n})_{n}\subseteq M_{1}(\Lambda)$  be such that
\[
\lim_{n \to \infty}\left\| z_{n}-v_{n}\right\|_{1}=0.
\]
Let    $(z_{n_{j}})_{j}$  be a subsequence of  $(z_{n})_{n}$  such that  converge to $z$. It follows that  $z\in \overline{M_{1}(\Lambda)}$ and  $(v_{n_{j}})_{j}$ converge to $z$. This concludes the proof.
\end{proof}
In order to apply Leray-Schauder degree to the operator $M_{1}$, we introduced a family of problems
depending on a parameter $\lambda $. For, $\lambda \in \left[0,T\right]$, we consider the family of boundary value problems
\begin{equation}\label{infi2}
\left\{\begin{array}{lll}
(\varphi(u' ))'   = \lambda f(t, u(t), u'(t) & & \\
u(T)=0=u'(0). 
\end{array}\right.
\end{equation}
Notice that (\ref{infi2}) coincide with (\ref{equa1}) for $\lambda =1$. So, for  each $ \lambda \in [0,1]$, the operator associated to  \ref{infi2}  for Lemma \ref{mate2}  is the  operator $M(\lambda,\cdot)$, where $M$  is defined on $[0,1]\times C^{1}$  by 
\begin{center}
$ M(\lambda,u)= K\left(\varphi^{-1} \left[ \lambda H( N_f (u))\right] \right)$.
\end{center}
Using the same arguments as in the proof  of Lemma \ref{lema1}  we show that the operator  $M$ is completely continuous. Moreover, using the same reasoning as above,  the system (\ref{infi2}) (see Lemma \ref{mate2}) is equivalent to the problem 
\begin{equation}\label{igualdade}
u=M(\lambda, u).
\end{equation}

%%%%%%%%%%%%%%%%%%%%%%%%%%%%%%%%%%%%%%%%%%%%%%%%%%%%%%%%%%%%%%%%%%%%%%%%%%%%%%%%%%%%%%%%%%%%%%%%%%%%%%%%%%%%%%%%%%%%%%%%%%%%%%%%%%%%%%%%%%%%%%%%%%%%%%%%%%%%%%%%%%%%%%%%%%%%%%%%%%%%%%%%%%%%%%%%%%%%%%%%%%%%%%%%%%%%%%%%%%%%%%%%%%%%%%%%%%%%%%%%%%%%%%%%%%%%%%%%%%%%%%%%%%%%%%%%%%%%%%%%%%%%%%%%%%%%%%%%%%%%%%%%%%%%%%%%%%%%%%%%%%%%%%%%%%%%%%%%%%%%%%%%%%%%%%%%%%%%%%%%%%%%%%%%%%%%%%%%%%%%%%%%%%%%%%%%%%%%%%%%%%%%%%%%%%%%%%%%%%%%%%%%%%%%%%%%%%%%%%%%%%%%%%%%%%%%%%%%%%%%%%%%%%%%%%%%%%%%%%%%%%%%%%%%%%%

\section{ Main results}
\label{S:2}
In this section, we present and prove our main results.
\begin{theorem}\label{numero1}
 Let $X$ be a Banach space, and $\varphi^{-1}$  a homeomorphism  with Lipschitz constant $k$. Suppose that  $f$  is completely continuous and that there exist two numbers $c_{0}, c_{1}\geq 0$ such that   
\begin{center}
$\left\|f(t,x,y)\right\|\leq c_{0}  + c_{1}\left\|y \right\|$,  \ \  for all  \ \  $(t,x,y) \in [0,T]\times X \times X$.
\end{center} 
 Then problem  (\ref{equa1}) has at least one solution.
\end{theorem}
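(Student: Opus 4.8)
The plan is to apply the Leray--Schauder degree to the completely continuous operator $M_1 = M(1,\cdot)$ on a sufficiently large ball of $C^1$, deforming it to the trivial map through the homotopy $M(\lambda,\cdot)$ introduced in Section \ref{S:0}. By Lemma \ref{mate2} a fixed point of $M_1$ is precisely a solution of (\ref{equa1}), while the discussion around (\ref{infi2})--(\ref{igualdade}) tells us that $M$ is completely continuous on $[0,1]\times C^1$ (by the argument of Lemma \ref{lema1}) and that $u = M(\lambda,u)$ is equivalent to $u$ solving (\ref{infi2}). So it suffices to produce an \emph{a priori} bound for all such $u$ and then compute a nonzero degree.

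First I would establish the \emph{a priori} bound for solutions of $u = M(\lambda,u)$, $\lambda\in[0,1]$. If $u$ solves (\ref{infi2}), then integrating from $0$ to $t$ and using $u'(0)=0$ gives $\varphi(u'(t)) = \lambda \int_0^t f(s,u(s),u'(s))\,ds$; applying $\varphi^{-1}$, using $\varphi^{-1}(0)=0$, its Lipschitz constant $k$, and the growth hypothesis $\|f(t,x,y)\|\le c_0+c_1\|y\|$, one obtains
\[
\|u'(t)\| \;\le\; k\int_0^t \bigl(c_0 + c_1\|u'(s)\|\bigr)\,ds \;\le\; kc_0T + kc_1\int_0^t \|u'(s)\|\,ds .
\]
Gronwall's inequality then yields $\|u'\|_\infty \le \rho$ with $\rho := kc_0T\,e^{kc_1T}$, and since $u(T)=0$ we have $u(t) = -\int_t^T u'(s)\,ds$, hence $\|u\|_\infty \le T\rho$ and $\|u\|_1 \le \max\{\rho, T\rho\}$. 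Choosing $R := \max\{\rho,T\rho\}+1$, the equation $u = M(\lambda,u)$ has no solution on $\partial B(0,R)$ for any $\lambda\in[0,1]$.

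It then follows from the homotopy invariance of the Leray--Schauder degree that
\[
\deg_{LS}\bigl(I - M(1,\cdot),\,B(0,R),\,0\bigr) \;=\; \deg_{LS}\bigl(I - M(0,\cdot),\,B(0,R),\,0\bigr).
\]
Since $\varphi^{-1}(0)=0$ we have $M(0,u) = K\bigl(\varphi^{-1}[0]\bigr) = 0$ for every $u$, so the right-hand side equals $\deg_{LS}(I, B(0,R),0) = 1 \ne 0$. Consequently $M_1 = M(1,\cdot)$ has a fixed point in $B(0,R)$, which by Lemma \ref{mate2} is a solution of (\ref{equa1}).

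The only genuine difficulty is the \emph{a priori} estimate: the role of the hypothesis that $\varphi^{-1}$ is Lipschitz is exactly to transfer the bound from $\varphi(u')$ back to $u'$, after which Gronwall closes the argument. Everything else --- complete continuity of $M$, the identification $M(0,\cdot)\equiv 0$, and the degree computation --- is routine.
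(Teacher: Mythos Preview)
Your proof is correct and follows essentially the same approach as the paper: both obtain the a priori bound via the Lipschitz property of $\varphi^{-1}$ together with Gronwall's inequality, then invoke the homotopy invariance of the Leray--Schauder degree between $M(1,\cdot)$ and $M(0,\cdot)\equiv 0$. You are in fact slightly more explicit than the paper in justifying that $M(0,\cdot)=0$ and hence that the degree equals $1$.
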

\begin{proof}
Let $(\lambda,u)\in [0, 1]\times C^{1}$ be such that  $M(\lambda,u)=u$. Using   \ref{igualdade}  we have that $u$ is solution  of (\ref{infi2}), which implies that
\begin{center} 
 $u'= \varphi^{-1}\left[\lambda H(N_f(u))\right]$, \   $u'(0)=0=u(T)$.
 \end{center}
Using the fact that  $\varphi^{-1}$ is a homeomorphism with Lipschitz constant $k$, we deduce that
\begin{center}
$\left\|u'(t)\right\| \leq kc_{0}T + kc_{1} \int_0^t \left\|u'(s)\right\|ds   \quad  (t\in [0,T])$.                                                     
\end{center}
 By  Gronwall's Inequality, we have
\begin{center}
$\left\|u'(t)\right\|\leq kc_{0}Te^{\int_0^t kc_{1} ds}\leq kc_{0}Te^{kc_{1}T}   \quad  (t\in [0,T])$.
\end{center}
Hence,  $\left\|u' \right\|_{\infty}\leq kc_{0}Te^{kc_{1}T}:= \beta$. Because $u\in C^{1}$ is such that $u'(T)=0$ we have that 
\begin{center}
$\left\|u(t)\right\|\leq \int_t^T\left\|u'(s)\right\|ds \leq \int_0^T \left\|u'(s)\right\|ds\leq \beta T   \  \  \  (t\in [0,T])$,
\end{center}
 and hence 
\begin{center}
$\left\| u \right\|_{1}\leq R_{1}$, \  where  $R_{1}=$max$ \left\{\beta, \ \beta T\right\} $.     
\end{center}
Using  that $M$ is completely continuous  we deduce that for each $\lambda\in [0, 1]$, the Leray-Schauder  degree $deg_{LS}(I-M(\lambda,\cdot),B_{\rho}(0),0)$ is well-defined  for any $\rho>R_{1} $, and by the homotopy invariance  we have that
\begin{center}
$deg_{LS}(I-M(1 ,\cdot),B_{\rho}(0) ,0)= deg_{LS}(I-M(0,\cdot),B_{\rho}(0),0)$.
\end{center}
Hence,  $deg_{LS}(I-M(1,\cdot),B_{\rho}(0),0)\neq 0$. This, in turn, implies that there exists  $u\in B_{\rho}(0) $  such that  $M_{1}(u)=u$, which is a solution for (\ref{equa1}).
\end{proof}

\begin{theorem}\label{numero2}
 Let $X$ be a Banach space, and $\varphi^{-1}$  a homeomorphism  with Lipschitz constant $k$. Assume that  $f$ is continuous and  satisfies the following conditions.   
\begin{enumerate}
\item There exist two numbers $c_{0}, c_{1}\geq 0$ such that   
\begin{center}
$\left\|f(t,x,y)\right\|\leq c_{0}  + c_{1}\left\|y \right\|$,  \ \  for all  \ \  $(t,x,y) \in [0,1]\times X \times X$.
\end{center} 
\item  For all bounded subsets  $A, B$ in  $X$,	
\begin{center}
$ \alpha(f( \left[0, 1 \right] \times A \times B))\leq k_{1} \max \left\{ \alpha(A), \ \alpha(B)\right\}$, \  where  \  $0<k_{1} < 1/2k$.
\end{center}
\end{enumerate} 
Then problem  (\ref{equa1}) has at least one solution.
\end{theorem}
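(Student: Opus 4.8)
The plan is to rerun the homotopy argument of Theorem \ref{numero1}, replacing the Leray--Schauder degree by the topological degree for $\alpha$-condensing perturbations of the identity (see \cite{guo}), since under hypothesis (2) the operator $M(\lambda,\cdot)$ need no longer be compact. The a priori bound is obtained exactly as in the proof of Theorem \ref{numero1}, using only hypothesis (1): if $u=M(\lambda,u)$ for some $\lambda\in[0,1]$, then $u'=\varphi^{-1}[\lambda H(N_f(u))]$ with $u'(0)=0=u(T)$, so the $k$-Lipschitz bound on $\varphi^{-1}$ and hypothesis (1) give $\|u'(t)\|\le kc_0T+kc_1\int_0^t\|u'(s)\|ds$; by Gronwall's inequality $\|u'\|_\infty\le\beta:=kc_0Te^{kc_1T}$, and from $u(t)=-\int_t^Tu'(s)ds$ also $\|u\|_\infty\le\beta T$, whence $\|u\|_1\le R_1:=\max\{\beta,\beta T\}$. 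Hypothesis (2) plays no role here.

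The heart of the proof is to show that $M$ is $\alpha$-condensing. It is continuous, and by hypothesis (1) it maps bounded sets to bounded sets. Let $\Omega\subset C^1$ be bounded and not relatively compact, so $\alpha_1(\Omega)>0$; I would estimate $\alpha_1(M([0,1]\times\Omega))$, which simultaneously takes care of the homotopy. By hypothesis (1), $N_f(\Omega)$ is bounded in $C$, so, exactly as in the proof of Lemma \ref{lema1}, $H(N_f(\Omega))$ is bounded and equicontinuous in $C$; since $\varphi^{-1}$ is $k$-Lipschitz and $|\lambda|\le1$, the derivative family $M([0,1]\times\Omega)'=\{\varphi^{-1}[\lambda H(N_f(u))]:\lambda\in[0,1],\ u\in\Omega\}$ is again bounded and equicontinuous in $C$, while $M([0,1]\times\Omega)$ is bounded in $C^1$. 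Hence Lemma \ref{lem5} applies and reduces the task to estimating $\alpha(M([0,1]\times\Omega)(t))$ and $\alpha(M([0,1]\times\Omega)'(t))$ for each $t\in I$. For the derivative term I would chain: a $k$-Lipschitz map multiplies $\alpha$ by at most $k$; multiplication by the scalar set $[0,1]$ leaves $\alpha$ unchanged (Lemma \ref{lem1}); the inclusion $\int_0^tN_f(u)(s)ds\in t\,\overline{conv}\{f(s,v(s),v'(s)):s\in I,\ v\in\Omega\}$, which by properties (b), (e), (g) of $\alpha$ bounds the $\alpha$ of the set of such integrals by $t\,\alpha\bigl(f(I\times\Omega(I)\times\Omega'(I))\bigr)$; hypothesis (2) with $A=\Omega(I)$, $B=\Omega'(I)$; and Lemma \ref{lem4}, which gives $\alpha(\Omega(I))\le\alpha_1(\Omega)$ and $\alpha(\Omega'(I))\le2\alpha_1(\Omega)$. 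This yields
\[
\alpha\bigl(M([0,1]\times\Omega)'(t)\bigr)\ \le\ k\,t\,k_1\max\{\alpha(\Omega(I)),\alpha(\Omega'(I))\}\ \le\ 2k\,k_1\,\alpha_1(\Omega).
\]
The value term is analogous: $M(\lambda,u)(t)=\int_t^T\bigl(-M(\lambda,u)'(s)\bigr)ds$ lies in $(T-t)\,\overline{conv}$ of $-M([0,1]\times\Omega)'(I)$, and since $M([0,1]\times\Omega)'$ is bounded and equicontinuous, Lemma \ref{lem3}(2) identifies $\alpha(M([0,1]\times\Omega)'(I))$ with $\max_{t\in I}\alpha(M([0,1]\times\Omega)'(t))$, already bounded above; hence $\alpha(M([0,1]\times\Omega)(t))$ satisfies the same estimate. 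Therefore $\alpha_1(M([0,1]\times\Omega))\le2kk_1\,\alpha_1(\Omega)<\alpha_1(\Omega)$, because $k_1<1/2k$; so $M(\lambda,\cdot)$ is $\alpha$-condensing, uniformly in $\lambda\in[0,1]$.

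Now fix $\rho>R_1$. By the a priori bound there is no $(\lambda,u)\in[0,1]\times\partial B_\rho(0)$ with $u=M(\lambda,u)$, and $M(\lambda,\cdot)$ is $\alpha$-condensing on $\overline{B_\rho(0)}$, so the degree $deg(I-M(\lambda,\cdot),B_\rho(0),0)$ is well-defined and, by homotopy invariance of the degree for $\alpha$-condensing maps, independent of $\lambda$. Since $\varphi(0)=0$ forces $\varphi^{-1}(0)=0$, one has $M(0,\cdot)\equiv0$, hence $deg(I-M(1,\cdot),B_\rho(0),0)=deg(I,B_\rho(0),0)=1\ne0$. Consequently $M_1=M(1,\cdot)$ has a fixed point in $B_\rho(0)$, which by Lemma \ref{mate2} is a solution of (\ref{equa1}).

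The main obstacle is the condensing estimate: one must carry the Kuratowski measure of noncompactness through the composition $u\mapsto N_f(u)\mapsto H(N_f(u))\mapsto\varphi^{-1}[\lambda H(N_f(u))]\mapsto K(\cdot)$ and, in order to apply Lemma \ref{lem5}, establish equicontinuity of the derivative family and control both the sup-norm and the derivative-sup-norm parts of $\alpha_1$ by $\max\{\alpha(\Omega(I)),\alpha(\Omega'(I))\}\le2\alpha_1(\Omega)$. The factor $2$ coming from Lemma \ref{lem4}(ii) is precisely why the hypothesis is stated with $1/2k$ rather than $1/k$.
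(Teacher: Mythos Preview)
Your proof is correct and follows essentially the same strategy as the paper: establish the a priori bound via Gronwall from hypothesis (1), show the fixed-point operator is $\alpha$-condensing by combining the equicontinuity of the derivative family with Lemmas \ref{lem5}, \ref{lem1}, \ref{lem4} and hypothesis (2), and conclude via homotopy invariance of the degree for $\alpha$-condensing maps. The only differences are cosmetic: the paper uses the homotopy $\widetilde{M}(\lambda,u)=\lambda M_{1}(u)$ rather than your $M(\lambda,u)=K\bigl(\varphi^{-1}[\lambda H(N_f(u))]\bigr)$, and it estimates $\alpha\bigl((M_{1}\Lambda)(\tau)\bigr)$ directly by a double convex-hull argument instead of reducing it to the derivative estimate via Lemma \ref{lem3} as you do; both variants lead to the same bound $\alpha_{1}\le 2kk_{1}\,\alpha_{1}(\Lambda)$.
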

\begin{proof}
Observe that $M_{1}$ maps bounded sets into bounded sets. Furthermore, its continuity follows by the continuity of the operators which compose $M_{1}$. We show that the operator $M_{1}$ is condensing ($\alpha$-condensing). In fact, for a bounded set $\Lambda$ in $C^{1}$, there exists  a constant $L_{1}>0$ such that  
\begin{center}
$\left\| N_{f}(u)\right\|_{\infty}\leq L_{1}$, \ for \ all \  $u \in \Lambda$. 
\end{center}
For $t, t_{1}\in\left[0, 1\right]$ we have that
\begin{align*}
\left\|(M_{1}u)'(t)-(M_{1}u)'(t_{1})\right\| &=\left\| \varphi^{-1}\left[H(N_{f}(u))\right](t) - \varphi^{-1} \left[H(N_{f}(u_{n}))\right](t_{1}) \right\|\\
&\leq k\left\| H(N_{f}(u))(t) -  H(N_{f}(u))(t_{1})\right\|\\
&\leq k\left\|\int_{t_1}^t f(s,u(s),u'(s))ds\right\|\\
&\leq kL_{1}\left|t-t_1\right|, 
\end{align*} 
which means $(M_{1}\Lambda)'$  is equicontinuous. Applying Lemma \ref{lem5} there exists $\tau \in \left[0, 1\right]$ or  $\omega \in \left[0, 1\right] $ with 
\begin{center}
$\alpha_{1}(M_{1}\Lambda)= \alpha((M_{1}\Lambda)(\tau))$
\end{center}
or
\begin{center}
$\alpha_{1}(M_{1}\Lambda) = \alpha((M_{1}\Lambda)'(\omega)) $.
\end{center}
Let us  consider the first case.
\begin{align*}
\alpha_{1}(M_{1}\Lambda)&= \alpha((M_{1}\Lambda)(\tau))\\
&= \alpha \left(\left\{ K\left(\varphi^{-1}\left[H(N_{f}(u))\right]\right)(\tau):u\in \Lambda  \right\}\right). 
\end{align*} 
Using the properties of $\alpha$, we see that
\begin{align*}
\alpha_{1}(M_{1}\Lambda)&\leq \alpha  \left(\left\{\int_\tau^1 \varphi^{-1} \left[H(N_{f}(u))(s)\right] ds:  u \in  \Lambda \right\}\right) \\
&\leq (1-\tau)\alpha \left( \overline{conv}\left\{ \varphi^{-1} \left[ H(N_{f}(u))(s)\right] : s\in \left[\tau, 1\right], \  u\in \Lambda \right\}\right) \\ 
&\leq \alpha \left( \left\{\varphi^{-1} \left[H(N_{f}(u))(s)\right]:  s\in \left[0, 1\right], \  u\in \Lambda \right\}  \right).
\end{align*}
Using the fact that $\varphi^{-1}$ is a homeomorphism with Lipschitz constant $k$, we deduce that
\begin{align*}
\alpha_{1}(M_{1}\Lambda)&\leq k \alpha \left( \left\{ H(N_{f}(u))(s): s\in \left[0, 1\right], \  u\in \Lambda  \right\}  \right) \\
&\leq k \alpha \left(  \left\{ \int_0^s f(t, u(t),u'(t)) : s\in \left[0, 1\right], \  u\in \Lambda \right\} \right) \\ 
&\leq k\alpha \left(\left[0, 1\right]\overline{conv} \left\{ f(t, u(t),u'(t)) : t\in \left[0, 1\right], \  u\in \Lambda \right\} \right).
\end{align*} 
Applying  Lemma  \ref{lem1} and  again the properties of $\alpha$, we obtain that
\begin{align*}
\alpha_{1}(M_{1}\Lambda)&\leq k \alpha \left(\left\{f(t, u(t),u'(t)) : t\in \left[0, 1\right], \  u\in \Lambda \right\}  \right) \\
&\leq k  \alpha \left( f\left( \left[0,1\right] \times \Lambda \left( \left[0,1\right] \right)\times \Lambda' \left( \left[0,1\right] \right) \right) \right). 
\end{align*}  
Using the assumption 2, we have that
\begin{center}
$\alpha_{1}\left( M_{1}\Lambda \right)\leq k k_{1} \max \left\{ \alpha \left(\Lambda(\left[0, 1\right])\right), \ \alpha \left(\Lambda'(\left[0, 1\right])\right)\right\}$.
\end{center}
This implies, by Lemma \ref{lem4}
\begin{center}
$\alpha_{1}\left(M_{1}\Lambda\right)\leq 2kk_{1}\alpha_{1}(\Lambda)$.
\end{center}

Consider the alternative case. Proceeding as before, we obtain
\begin{center}
$\alpha_{1}(M_{1}\Lambda) =\alpha\left((M_{1}\Lambda)'(\omega)\right) \leq 2kk_{1}\alpha_{1}(\Lambda)$.
\end{center}
Therefore, in either case, we obtain 
\begin{center}
$\alpha_{1}(M_{1}\Lambda)\leq 2kk_{1}\alpha_{1}(\Lambda)$.
\end{center}
 By  the assumption 2, we get $0<2kk_{1}<1$, therefore  $M_{1}$  is $\alpha$-condensing.

Let us consider the function
\begin{center}
$\widetilde{M}:\left[0, 1\right]\times C^{1}\rightarrow C^{1}, \quad (\lambda,u)\mapsto \lambda M_{1}(u)$.
\end{center}
Let $(\lambda,u)\in \left[0, 1\right]\times C^{1} $ be such that $u=\widetilde{M}(\lambda,u)$. Using the fact that $\varphi^{-1}$ is a homeomorphism with Lipschitz constant $k$ and   Gronwall's Inequality, we deduce that there exists a constant $r>0$ such that $\left\|u\right\|_{1}< r$.

Finally, we show the existence of at least one solution of (\ref{equa1}) using the homotopy invariance of the degree for $\alpha$-condensing maps. Let $B$ be bounded in $C^{1}$. Then
\begin{align*}
\alpha_{1}\left( \widetilde{M}\left(\left[0,1\right]\times B\right)\right)&=\alpha_{1}\left( \widetilde{M}(\lambda, u ):\lambda \in \left[0, 1\right],\ u\in B\right) \\
&\leq 2kk_{1}\alpha_{1}\left(B\right). 
\end{align*}  
Then we have that for each $\lambda \in \left[0,1\right]$, the degree  $deg_{N}(I-\widetilde{M}(\lambda, \cdot), B_{r}(0), 0)$  is well-defined and, by the properties of that degree, that 
\begin{center}
 $deg_{N}(I-\widetilde{M}(1, \cdot), B_{r}(0), 0)= deg_{N}(I-\widetilde{M}(0, \cdot), B_{r}(0), 0)=1$.
\end{center}
Then, from the existence property of degree, there exists  $u \in B_{r}(0)$ such that  $ u= \widetilde{M}(1,u)= M_{1}(u)=u$, which is a solution for (\ref{equa1}).  
\end{proof}

\begin{remark}
 In  \cite{chandra}, the nonlinear term $f(t, x,y)$ is bounded, in our result, the nonlinear term $f(t, x,y)$ may no more than a linear growth.
\end{remark}

\section{Boundary value problems in Hilbert spaces}
\label{work2}
Throughout this section, let  $(X,\left\langle \cdot ,\cdot \right\rangle )$ denote a real Hilbert space. Assume that $\varphi:X\rightarrow X$ satisfies the following conditions.  
\begin{enumerate}
\item  $\varphi^{-1}$  is a homeomorphism  with Lipschitz constant $k$.    
\item For any $x, y \in X, \  x\neq y $,
\begin{center}
$\left\langle \varphi(x)-\varphi(y), x-y\right\rangle >0$.
\end{center}
\end{enumerate} 

\begin{lemma}\label{tato2}
Let  $h\in C([0,T],\mathbb{R^{+}})$ be such that
\begin{center}
$\left\|f(t,x,y)\right\|\leq \left\langle f(t,x,y),x\right\rangle + h(t)$
\end{center}
for all  $(t,x,y)\in [0,T]\times X \times X $. If $(\lambda,u)\in [0, 1]\times C^{1}$ is  such that  $M(\lambda,u)=u$,  then there exists $ R> 0$ such that  $\left\| u \right\|_{1}\leq R$. 
\end{lemma}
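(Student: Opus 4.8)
The plan is to extract the bound from an a priori estimate on $\|u'\|_\infty$ alone: since $u(T)=0$, one has $u(t)=-\int_t^T u'(s)\,ds$, hence $\|u\|_\infty\le\int_0^T\|u'(s)\|\,ds\le T\|u'\|_\infty$, and therefore $\|u\|_1=\max\{\|u\|_\infty,\|u'\|_\infty\}\le\max\{1,T\}\|u'\|_\infty$. So it suffices to bound $\|u'\|_\infty$ by a constant depending only on $k$, $T$ and $h$.

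First I would unwind the fixed point identity $u=M(\lambda,u)=K(\varphi^{-1}[\lambda H(N_f(u))])$ exactly as in Lemma \ref{mate2}: differentiating gives $u'=\varphi^{-1}[\lambda H(N_f(u))]$, and since $\varphi(0)=0$ and $\varphi^{-1}$ is Lipschitz with constant $k$,
\[
\|u'(t)\|=\big\|\varphi^{-1}(\lambda H(N_f(u))(t))-\varphi^{-1}(0)\big\|\le k\lambda\Big\|\int_0^t f(s,u(s),u'(s))\,ds\Big\|\le k\int_0^T\|f(s,u(s),u'(s))\|\,ds
\]
for every $t\in[0,T]$. Thus the whole problem reduces to bounding $\int_0^T\|f(s,u(s),u'(s))\|\,ds$ independently of $\lambda$ and $u$.

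The key step is to combine the structural hypothesis with the monotonicity condition~2. If $\lambda=0$ then $u\equiv0$ and there is nothing to prove, so assume $\lambda\in(0,1]$. From $u=M(\lambda,u)$ we also get that $t\mapsto\varphi(u'(t))=\lambda\int_0^t f(s,u(s),u'(s))\,ds$ is continuously differentiable with $(\varphi(u'(t)))'=\lambda f(t,u(t),u'(t))$ and $\varphi(u'(0))=\varphi(0)=0$. Hence $s\mapsto\langle\varphi(u'(s)),u(s)\rangle$ is $C^1$, and integrating the identity $\langle f(s,u(s),u'(s)),u(s)\rangle=\tfrac1\lambda\langle(\varphi(u'(s)))',u(s)\rangle$ by parts over $[0,T]$, using $u(T)=0$ and $\varphi(u'(0))=0$, yields
\[
\lambda\int_0^T\langle f(s,u(s),u'(s)),u(s)\rangle\,ds=-\int_0^T\langle\varphi(u'(s)),u'(s)\rangle\,ds\le0,
\]
since condition~2 applied with $y=0$ together with $\varphi(0)=0$ gives $\langle\varphi(x),x\rangle\ge0$ for all $x\in X$. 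Feeding this into the pointwise bound $\|f(s,u(s),u'(s))\|\le\langle f(s,u(s),u'(s)),u(s)\rangle+h(s)$ and integrating over $[0,T]$ gives $\int_0^T\|f(s,u(s),u'(s))\|\,ds\le\int_0^T h(s)\,ds=\|h\|_{L^1}$.

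Plugging this back into the estimate above gives $\|u'\|_\infty\le k\|h\|_{L^1}$, hence $\|u\|_1\le R:=k\|h\|_{L^1}\max\{1,T\}$, which is the asserted constant. The only delicate point — the one I would flag as the crux — is the legitimacy of the integration by parts, which rests on $t\mapsto\varphi(u'(t))$ being continuously differentiable; this is inherited from the fixed point identity in the same way it enters the very definition of a solution of (\ref{equa1}). Everything else is a direct computation, the decisive observation being that pairing $(\varphi(u'))'$ with $u$ and integrating by parts turns the awkward nonlinear term into the sign-definite quantity $\int_0^T\langle\varphi(u'),u'\rangle\,ds$, which the monotonicity of $\varphi$ kills.
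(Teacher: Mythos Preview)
Your proof is correct and follows essentially the same route as the paper: both arguments pair $(\varphi(u'))'$ with $u$, integrate by parts using the boundary conditions, invoke the monotonicity $\langle\varphi(x),x\rangle\ge0$ to get $\int_0^T\langle f,u\rangle\le0$, and then combine this with the structural hypothesis on $f$ to bound $\|u'\|_\infty$ (and hence $\|u\|_1$ via $u(T)=0$). The only cosmetic difference is that the paper first bounds $\|\varphi(u'(t))\|\le\|h\|_{L^1}$ and then appeals to $\varphi^{-1}$ to get an (unspecified) constant $L$, whereas you apply the Lipschitz constant $k$ of $\varphi^{-1}$ at the outset and obtain the explicit value $L=k\|h\|_{L^1}$.
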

\begin{proof}
Let $(\lambda,u)\in (0, 1]\times C^{1}$  be such that $M(\lambda,u)=u$. Using   \ref{igualdade}  we have that $u$ is solution  of (\ref{infi2}), which implies that
\begin{center} 
 $u'= \varphi^{-1}\left[\lambda H(N_f(u))\right]$, \   $u'(0)=0=u(T)$,
 \end{center} 
where for all  $t\in[0,T]$, we obtain
\begin{align*}
\left\|\lambda H(N_{f}(u))(t)\right\|& \leq \int_0^T \left\|f(s,u(s),u'(s))\right\|ds \\
&\leq \int_0^T \left\langle f(s,u(s),u'(s)), u(s)\right\rangle ds +\int_0^T h(s)ds. 
\end{align*}                          
On the other hand, because  $\varphi $  is a homeomorphism such that
\begin{center}  
$\left\langle \varphi(y), y\right\rangle\geq 0$
\end{center}
 for all $y\in X$. Then
\begin{center}   
$ \left\langle \varphi(u'(t))), u'(t)\right\rangle\geq 0  \ \   (t\in[0,T])$,     
\end{center}
and hence
\begin{center}
$-\int_0^T \left\langle \varphi(u'(t)), u'(t)\right\rangle dt \leq 0$.
\end{center}
Using the integration by parts formula and the boundary conditions, we deduce that
\begin{center}
$\int_0^T \left\langle (\varphi(u'(t)))', u(t)\right\rangle dt = - \int_0^T \left\langle \varphi(u'(t)), u'(t)\right\rangle dt \leq 0 $.
\end{center}
Since $\lambda\in (0, 1]$ and  $u$ is solution of (\ref{infi2}) we have that
\begin{center}
$\int_0^T \left\langle f(t,u(t),u'(t)), u(t)\right\rangle dt\leq 0$.
\end{center}
Hence,
\begin{center}
$\left\|\varphi(u'(t)) \right\|\leq \left\|h \right\|_{L^{1}}$.
\end{center}
It follows that there exists  $L>0$ such that  $\left\|u'\right\|_{\infty}\leq L$. Because $u\in C^{1}$ is such that  $u(T)=0$, we deduce that
\begin{center}
$\left\|u(t)\right\|\leq \int_t^T\left\|u'(s)\right\|ds \leq \int_0^T \left\|u'(s)\right\|ds\leq LT  \quad  (t\in \left[0,T\right])$,
\end{center}
and hence
\begin{center}
$\left\| u \right\|_{1}\leq R$, \  where  \  $R=$max$\left\{L, \ LT\right\}$.    
\end{center}
 Finally, if  $u=M(0, u)$, then  $u=0$, so the proof is complete.
\end{proof}

Now we show the existence of at least one solution for problem (\ref{equa1}) by means of Leray-Schauder degree.
\begin{theorem}
Let $f$  be completely continuous. Assume that $f$ satisfies the conditions of Lemma \ref{tato2}. Then  (\ref{equa1}) has at least one solution.
\end{theorem}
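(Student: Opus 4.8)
The plan is to run the Leray--Schauder degree argument of Theorem \ref{numero1} almost verbatim, the only change being that the a priori bound is now supplied by Lemma \ref{tato2} rather than by the linear growth hypothesis together with Gronwall's inequality. So I would work with the homotopy $M:[0,1]\times C^{1}\to C^{1}$, $M(\lambda,u)=K(\varphi^{-1}[\lambda H(N_f(u))])$, introduced in Section \ref{S:0}, and recall two facts established earlier: first, since $f$ is completely continuous, the argument of Lemma \ref{lema1} shows that $M$ is completely continuous; second, by Lemma \ref{mate2} applied to the family (\ref{infi2}), $u=M(1,u)$ holds precisely when $u$ is a solution of (\ref{equa1}).

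Next I would invoke Lemma \ref{tato2}: under the standing hypotheses on $\varphi$ in this section together with the inequality $\|f(t,x,y)\|\le\langle f(t,x,y),x\rangle+h(t)$, there is a constant $R>0$ such that every pair $(\lambda,u)\in[0,1]\times C^{1}$ with $u=M(\lambda,u)$ satisfies $\|u\|_{1}\le R$. Fixing any $\rho>R$, it follows that $u-M(\lambda,u)\neq 0$ for all $u\in\partial B_{\rho}(0)$ and all $\lambda\in[0,1]$, so $\deg_{LS}(I-M(\lambda,\cdot),B_{\rho}(0),0)$ is well defined for each $\lambda$ and, by the homotopy invariance of the Leray--Schauder degree, is independent of $\lambda$.

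Finally I would evaluate the degree at $\lambda=0$. Because $\varphi(0)=0$ we have $\varphi^{-1}(0)=0$, hence $M(0,u)=K(\varphi^{-1}[0])=K(0)=0$ for every $u\in C^{1}$; thus $I-M(0,\cdot)=I$ and $\deg_{LS}(I-M(0,\cdot),B_{\rho}(0),0)=1$. Therefore $\deg_{LS}(I-M(1,\cdot),B_{\rho}(0),0)=1\neq 0$, and the existence property of the degree produces $u\in B_{\rho}(0)$ with $M(1,u)=u$, which by Lemma \ref{mate2} is a solution of (\ref{equa1}). The genuine content of the theorem is concentrated in Lemma \ref{tato2}, whose proof already uses the Hilbert-space inner product, the monotonicity condition $\langle\varphi(x)-\varphi(y),x-y\rangle>0$, and integration by parts to absorb the term $\int_{0}^{T}\langle(\varphi(u'))',u\rangle\,dt$; once that lemma is in hand, the degree-theoretic step above presents no further obstacle.
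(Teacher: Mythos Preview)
Your proposal is correct and follows essentially the same route as the paper: invoke Lemma \ref{tato2} for the uniform a priori bound on fixed points of $M(\lambda,\cdot)$, use complete continuity of $M$ (via Lemma \ref{lema1}) to make the Leray--Schauder degree well defined on a large ball, apply homotopy invariance, and compute $\deg_{LS}(I-M(0,\cdot),B_{\rho}(0),0)=1$ from $M(0,\cdot)=0$. Your write-up is in fact slightly more explicit than the paper's, e.g.\ in justifying $M(0,\cdot)=0$ via $\varphi^{-1}(0)=0$.
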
 
\begin{proof}
Let $\lambda \in \left[0,1\right]$ and $u$ be a possible fixed  point of $M(\lambda, \cdot)$. Then, using Lemma \ref{tato2} we deduce that  
\begin{center}
$\left\| u \right\|_{1}\leq R$, \  where  \  $R=$max$\left\{L, \ LT\right\}$.    
\end{center}
Therefore, if $\rho>R$, it follows from the homotopy invariance of Leray-Schauder degree that $deg_{LS}(I-M(\lambda, \cdot), B_{\rho}(0), 0)$ is independent of $\lambda \in\left[0,1\right]$ so that, if we notice that $M(0, \cdot)=0$,
\begin{center}
$deg_{LS}(I-M(1,\cdot),B_{\rho}(0),0)=deg_{LS}(I -M(0,\cdot) ,B_{\rho}(0),0)=1$.
\end{center}
Hence there exists $u\in  B_{\rho}(0)$ such that is a solution for (\ref{equa1}). 
\end{proof}

Using a proof similar to that of Theorem \ref{numero2}, we obtain the following existence result.
\begin{theorem}\label{numero3}
 Let $f=f(t,x)$ be continuous. Assume that  $f$  satisfies the following conditions.   
\begin{enumerate}
\item  There exists $h\in C([0,1],\mathbb{R^{+}})$  such that
\begin{center}
$\left\|f(t,x)\right\|\leq \left\langle f(t,x),x\right\rangle + h(t)$
\end{center}
for all $(t,x)\in [0,1]\times X$.    
\item  $f$ sends bounded sets into bounded sets.	
\item For any a bounded set $S$ in $X$, 
\begin{center}
$ \alpha(f( \left[0, 1 \right] \times S))\leq k_{1}  \alpha(S)$, \  where  \  $0<k_{1} < 1/k$.
\end{center}
\end{enumerate} 
Then problem  (\ref{equa1}) has at least one solution.
\end{theorem}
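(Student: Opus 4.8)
The plan is to adapt the proof of Theorem \ref{numero2}, using the degree for $\alpha$-condensing maps, but with two changes: the a priori bound is supplied by Lemma \ref{tato2} (whose hypothesis is exactly condition~1 read with $f(t,x,y):=f(t,x)$) rather than by Gronwall's inequality, and we use that here $f$ does not depend on $u'$. I would work with the homotopy $M:[0,1]\times C^{1}\to C^{1}$, $M(\lambda,u)=K\left(\varphi^{-1}\left[\lambda H(N_f(u))\right]\right)$, already introduced before Lemma \ref{tato2}; by Lemma \ref{mate2} applied to (\ref{infi2}), the fixed points of $M(1,\cdot)=M_{1}$ are precisely the solutions of (\ref{equa1}), while $M(0,\cdot)\equiv 0$ since $\varphi(0)=0$.

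First I would note that, by condition~2, $M$ carries bounded sets to bounded sets, and that $M$ is continuous, being a composition of continuous operators. Then, for a bounded $\Lambda\subset C^{1}$ pick $L_{1}>0$ with $\|N_f(u)\|_{\infty}\le L_{1}$ for $u\in\Lambda$; the reverse-Lipschitz property of $\varphi^{-1}$ gives, for all $\lambda\in[0,1]$ and $t,t_{1}\in[0,1]$,
\[
\left\|(M(\lambda,u))'(t)-(M(\lambda,u))'(t_{1})\right\|\le k\left\|\int_{t_{1}}^{t} f(s,u(s))\,ds\right\|\le kL_{1}\left|t-t_{1}\right|,
\]
so $\left(M([0,1]\times\Lambda)\right)'$ is equicontinuous (the bound is $\lambda$-independent, which is what makes the whole family admissible). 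Hence Lemma \ref{lem5} applies to $M([0,1]\times\Lambda)$, and $\alpha_{1}\left(M([0,1]\times\Lambda)\right)$ equals $\alpha\left(\left(M([0,1]\times\Lambda)\right)(\tau)\right)$ or $\alpha\left(\left(M([0,1]\times\Lambda)\right)'(\omega)\right)$ for suitable $\tau,\omega\in[0,1]$.

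The core of the argument is then the same chain of inequalities as in Theorem \ref{numero2}: running through the integral/convex-hull steps with the properties of $\alpha$, using Lemma \ref{lem1} to absorb the scalar factor $\lambda\in[0,1]$, and applying the reverse-Lipschitz bound for $\varphi^{-1}$, one obtains in either case
\[
\alpha_{1}\left(M([0,1]\times\Lambda)\right)\le k\,\alpha\left(\left\{f(t,u(t)):t\in[0,1],\ u\in\Lambda\right\}\right)\le k\,\alpha\left(f\left([0,1]\times\Lambda([0,1])\right)\right).
\]
This is the one place where the argument genuinely differs from that of Theorem \ref{numero2}: since $f$ no longer depends on $u'$, only $\Lambda([0,1])$ appears, so Lemma \ref{lem4} gives $\alpha(\Lambda([0,1]))\le\alpha_{1}(\Lambda)$ \emph{with no factor $2$}, and condition~3 then yields $\alpha_{1}\left(M([0,1]\times\Lambda)\right)\le kk_{1}\alpha_{1}(\Lambda)$ with $kk_{1}<1$ by hypothesis. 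Thus each $M(\lambda,\cdot)$ is $\alpha$-condensing and $M$ is an admissible homotopy for the degree for $\alpha$-condensing maps.

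Finally I would invoke Lemma \ref{tato2} to obtain $R>0$ such that every fixed point of $M(\lambda,\cdot)$, $\lambda\in[0,1]$, satisfies $\|u\|_{1}\le R$; fixing $\rho>R$ there are no fixed points of $M(\lambda,\cdot)$ on $\partial B_{\rho}(0)$, so $deg_{N}(I-M(\lambda,\cdot),B_{\rho}(0),0)$ is defined for every $\lambda$, and by homotopy invariance together with $M(0,\cdot)=0$,
\[
deg_{N}\left(I-M(1,\cdot),B_{\rho}(0),0\right)=deg_{N}\left(I-M(0,\cdot),B_{\rho}(0),0\right)=1.
\]
By the existence property of the degree there is $u\in B_{\rho}(0)$ with $M(1,u)=u$, which by Lemma \ref{mate2} solves (\ref{equa1}). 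I expect the only delicate point to be checking that $M$ is a legitimate homotopy in the $\alpha$-condensing sense, i.e. that the estimate above holds uniformly when the whole family $M([0,1]\times\Lambda)$ is treated at once; but this is immediate because $|\lambda|\le 1$ makes every bound $\lambda$-independent, and the rest is the routine $\alpha$-bookkeeping already carried out in Theorem \ref{numero2}.
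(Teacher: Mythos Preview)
Your proposal is correct and follows essentially the same approach as the paper: you use the homotopy $M(\lambda,u)=K(\varphi^{-1}[\lambda H(N_f(u))])$, establish the $\alpha$-condensing estimate by repeating the chain of inequalities from Theorem~\ref{numero2} (noting correctly that the absence of $u'$ in $f$ removes the factor $2$, giving $kk_{1}<1$), invoke Lemma~\ref{tato2} for the a~priori bound, and conclude via homotopy invariance of the degree for $\alpha$-condensing maps. Your write-up in fact spells out details the paper leaves implicit under ``Proceeding as Theorem~\ref{numero2}''.
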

\begin{proof}
Let $A$ be bounded in $C^{1}$. Applying Lemma \ref{lem5}  there exists $\tau \in \left[0, 1\right]$ or  $\omega \in \left[0, 1\right] $  with 
\begin{center}
$\alpha_{1}(M( \left[0, 1\right]\times A)) = \alpha(M(\left[0, 1\right]\times A)(\tau))$
\end{center}
or
\begin{center}
$\alpha_{1}(M(\left[0, 1\right] \times A)) = \alpha(\left(M(\left[0, 1\right]\times A)\right)'(\omega))$.
\end{center}
Proceeding as Theorem \ref{numero2}, we obtain in either case
\begin{center}
$\alpha_{1}(M(\left[0, 1\right] \times A))\leq  kk_{1}\alpha_{1} \left(A \right)$,  \  where \  $kk_{1}<1$.
\end{center}
Using the homotopy invariance of the degree for $\alpha$-condensing maps, we obtain 
\begin{center}
$deg_{N}(I-M(0,\cdot),B_{\rho}(0),0)=deg_{N}(I,B_{\rho}(0),0)=1$, \  where   $\rho> L$.
\end{center} 
Then, from the existence property of degree, there exists $u\in B_{\rho}(0)$ such that $u=M(1,u)$, which is a solution for (\ref{equa1}).
\end{proof}

The following corollary is concerned with the existence of one solution for (\ref{equa1}).
\begin{corollary}\label{numero4}
  Assume that  $f=f(t,x)$  satisfies the following conditions.   
\begin{enumerate}
\item Suppose that for any $\delta>0$  the mapping  $f:[0,1]\times X\rightarrow X$  is bounded and uniformly continuous
 in $ [0,1] \times  \overline{B_{\delta}(0)}$, where  $\overline{B_{\delta}(0)} =\left\{x\in X:\left\|x\right\|\leq \delta \right\}$.     
\item  There exists $h\in C([0,1],\mathbb{R^{+}})$  such that
\begin{center}
$\left\|f(t,x)\right\|\leq \left\langle f(t,x),x\right\rangle + h(t)$
\end{center}
for all $(t,x)\in [0,1]\times X$.  
\item There exists  a constant $k_{1}$  with  $0<k_{1}<1/k$ such that
\begin{center}
$\left\|f(t,x)-f(t,y)\right\|\leq k_{1}\left\|x-y\right\|$, \  for all $(t,x)\in [0,1]\times X$.
\end{center}
\end{enumerate} 
Then problem  (\ref{equa1}) has at least one solution.
\end{corollary}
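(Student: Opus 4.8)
The strategy is to deduce the corollary from Theorem~\ref{numero3}: the standing hypotheses on $\varphi$ for this section are in force, so it suffices to check that the $f$ of the corollary is continuous and satisfies the three assumptions of that theorem. Continuity is immediate, since every point of $[0,1]\times X$ lies in a slab $[0,1]\times\overline{B_{\delta}(0)}$ on which $f$ is uniformly continuous by assumption~1; the same assumption says $f$ is bounded on each such slab, which is precisely ``$f$ sends bounded sets into bounded sets'', i.e.\ assumption~2 of Theorem~\ref{numero3}. Assumption~1 of Theorem~\ref{numero3} is word for word assumption~2 of the corollary, so only assumption~3 of Theorem~\ref{numero3} requires work.

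That assumption asks that $\alpha\big(f([0,1]\times S)\big)\le k_{1}\,\alpha(S)$ for every bounded $S\subset X$. Fix such an $S$, pick $\delta$ with $S\subseteq\overline{B_{\delta}(0)}$, and fix $\varepsilon>0$. First cover $S$ by finitely many sets $S_{1},\dots,S_{n}$ with $\operatorname{diam}S_{i}\le\alpha(S)+\varepsilon$; by the Lipschitz bound in assumption~3, $\operatorname{diam}f(t,S_{i})\le k_{1}(\alpha(S)+\varepsilon)$ for every $t\in[0,1]$ and every $i$. Next, using the uniform continuity of $f$ on $[0,1]\times\overline{B_{\delta}(0)}$, choose a partition $0=t_{0}<\dots<t_{m}=1$ fine enough that $\|f(t,x)-f(t_{j},x)\|\le\varepsilon$ whenever $t\in[t_{j-1},t_{j}]$ and $x\in\overline{B_{\delta}(0)}$. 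Then
\[
f([0,1]\times S)=\bigcup_{i=1}^{n}\bigcup_{j=1}^{m}f\big([t_{j-1},t_{j}]\times S_{i}\big),\qquad f\big([t_{j-1},t_{j}]\times S_{i}\big)\subseteq f(t_{j},S_{i})+\overline{B_{\varepsilon}(0)},
\]
so $f([0,1]\times S)$ is covered by $nm$ sets each of diameter at most $k_{1}(\alpha(S)+\varepsilon)+2\varepsilon$. Hence $\alpha\big(f([0,1]\times S)\big)\le k_{1}(\alpha(S)+\varepsilon)+2\varepsilon$ for every $\varepsilon>0$, and letting $\varepsilon\downarrow0$ yields $\alpha\big(f([0,1]\times S)\big)\le k_{1}\,\alpha(S)$ with exactly the constant $k_{1}<1/k$ demanded by Theorem~\ref{numero3}.

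All hypotheses of Theorem~\ref{numero3} now hold, and it produces a solution of~(\ref{equa1}). The only real obstacle is the displayed estimate: one must let the Lipschitz constant compress $\operatorname{diam}S_{i}$ directly, so that the resulting coefficient is $k_{1}$ and not $2k_{1}$, while the $t$-direction is absorbed into an arbitrarily small \emph{additive} error. This is exactly why assumption~1 is phrased with uniform --- not merely pointwise --- continuity in $t$: splitting $[0,1]$ into finitely many subintervals of small oscillation is what keeps the parameter $t$ from contributing a multiplicative factor to the estimate.
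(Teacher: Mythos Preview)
Your proof is correct and follows the same overall strategy as the paper: both deduce the corollary from Theorem~\ref{numero3} by verifying that assumption~3 of the corollary yields the measure-of-noncompactness estimate $\alpha\big(f([0,1]\times S)\big)\le k_{1}\,\alpha(S)$.

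The only difference is in how that estimate is obtained. The paper parametrizes the family $H=\{\psi_{x}:x\in S\}\subset C$ with $\psi_{x}(t)=f(t,x)$, observes that assumption~1 makes $H$ bounded and equicontinuous, and then invokes Lemma~\ref{lem3} to conclude $\alpha\big(f([0,1]\times S)\big)=\max_{t\in[0,1]}\alpha\big(\{f(t,x):x\in S\}\big)$; the Lipschitz condition then gives $\alpha(\{f(t,x):x\in S\})\le k_{1}\alpha(S)$ for each fixed $t$. You instead carry out a direct finite-covering argument, using uniform continuity to slice $[0,1]$ and the Lipschitz bound on each $S_{i}$, which in effect reproves the relevant case of Lemma~\ref{lem3} by hand. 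The paper's route is shorter once Lemma~\ref{lem3} is available; yours is more self-contained and makes explicit why uniform continuity in $t$ is exactly what keeps the $t$-direction from inflating the constant.
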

\begin{proof}
Let $S$ be a bounded set in $X$. Let us consider 
\begin{center}
$H=\left\{\psi_{x}: x\in S\right\}$, \  where \  $\psi_{x}(t)=f(t,x)$ \  for all  \ $t\in \left[0,1\right]$.
\end{center}
Cleary, $H\subset C$, $H$ is bounded and equicontinuous. Thus, by  using the conclusion  of Lemma \ref{lem3}, we have 
\begin{center}
$\alpha_{c} (H)=\alpha (H([0,1]))=\alpha (f([0,1]\times S))=\displaystyle \max_{\left[0,1\right]} \alpha(\left\{f(t,x):x\in S\right\} )$.
\end{center}
Using the assumption 3, we obtain
\begin{center}
$\alpha (f([0,1]\times S))\leq   k_{1}\alpha(S)$.
\end{center}
By using the arguments of Theorem \ref{numero3}, we can obtain the conclusion of Corollary \ref{numero4}.
\end{proof}

\section*{Acknowledgements}
 This research was supported by CAPES and CNPq/Brazil.

\bibliographystyle{plain}

\renewcommand\bibname{References Bibliogr\'aficas}

\end{document}